\theoremstyle:=definition,remark,plain\do{%
        \expandafter\g@addto@macro\csname th@\theoremstyle\endcsname{%
            \addtolength\thm@preskip\parskip
            }%
        }
\definecolor{green}{rgb}{0.,0.6,0.}
\title{\vspace{-2cm}RECURRENCE RELATION FOR PLETHYSM}
\author{Étienne Tétreault}
\begin{document}
	\maketitle
	\pagenumbering{gobble}
	\pagenumbering{arabic}
	\newcommand{\s}{\mathfrak{S}}
	\newcommand{\hooklongrightarrow}{\lhook\joinrel\longrightarrow}
	\newtheorem{thm}{Theorem}[section]
	\newtheorem{prop}[thm]{Proposition}
	\newtheorem{cor}[thm]{Corollary}
	\newtheorem{conj}[thm]{Conjecture}
	\newtheorem{lem}[thm]{Lemma}
	\renewcommand{\proofname}{Proof}
	\renewcommand{\abstractname}{Abstract}
	\renewcommand{\refname}{References}

\begin{abstract}
We describe a recurrence formula for the plethysm $h_3[h_n]$. The proof is based on the original formula by Thrall.
\end{abstract}

The representation theory of the group $\text{GL}(V)$ has been developed since more than a century. We know that the irreducible representations are $S^{\nu}(V)$, where $\nu$ is a partition of an integer. When $\nu = (n)$, it is the $n^{th}$ symmetric power, and when $\nu=(1^n)$, it is the $n^{th}$ exterior product. As a representation is itself a vector space, if $\mu$ is another partition, it makes sense to consider $S^{\mu}(S^{\nu}(V))$. This is an irreducible representation of $\text{GL}(S^{\nu}(V))$, but it is not irreducible as a representation of $\text{GL}(V)$. To decompose $S^{\mu}(S^{\nu}(V))$ in irreducible representations is a key open problem in algebraic combinatorics, according to Stanley \cite{Stanley99}.

We know that the character of $S^{\nu}(V)$ is the Schur function $s_{\nu}$. They form a basis of the ring $\Lambda$ of symmetric functions. The character of $S^{\mu}(S^{\nu}(V))$ is denoted $s_{\mu}[s_{\nu}]$, and this operation is known as the \textit{plethysm} of symmetric functions, defined by Littlewood \cite{Littlewood}. Using the character theory, the decomposition of $S^{\mu}(S^{\nu}(V))$ in irreducible representations is equivalent to write the plethysm $s_{\mu}[s_{\nu}]$ as a sum of Schur functions. From the representation theory, we know that the coefficient of each Schur function appearing in the plethysm are non-negative integers. Whenever this happens, we say that the symmetric function is \textit{Schur-positive}. 

The Foulkes' conjecture goes back to 1950 \cite{Foulkes}. For a $\mathbb{C}$-vector space $V$, it states that there is an injection from $S^m(S^n(V))$ to $S^n(S^m(V))$ whenever $m \leq n$. Equivalently, as $s_{(n)} = h_n$, the $n^{th}$ homogeneous symmetric function, it states that $h_n[h_m] - h_m[h_n]$ is Schur-positive whenever $m \leq n$. This conjecture is known to hold when $n \leq 5$ \cite{Thrall}\cite{DentSiemons}\cite{McKay}\cite{CIM}, and when $n-m$ is large enough \cite{Brion}. 

The goal of this paper is to show a recurrence formula for the plethysm $h_3[h_n]$. While a general formula has been found by Thrall \cite{Thrall}, the author has not found a recurrence of this kind in the literature. This is an undergrad summer research project done at LaCIM in 2016 (Université du Québec à Montréal, Canada), under the supervision of François Bergeron.

\section{Preliminary notions}

We first need some linear operators on the ring $\mu$ of symmetric functions. As Schur functions are a basis, it suffices to describe the operators on these functions. The first one is a projection on Schur functions indexed by partitions that have at most $k$ parts, denoted $\downarrow_k$, and defined by
\[
s_{\mu} \downarrow_k = \left \{ \begin{array}{l l} s_{\mu} & \text{if} \ \ell (\mu) \leq k\\ 0 & \text{else} \\ \end{array} \right.
\]
Also, consider the following bilinear operator, denoted $\odot$, that adds the indices of two Schur functions in the following way:
\[
s_{\mu} \odot s_{\lambda} = s_{\mu + \lambda},
\]
where the sum of two partitions is done componentwise, adding zero parts if necessary.

For example, we know the following formula, that goes back to Littlewood \cite{Littlewood}:
\[
h_2[h_n] = \displaystyle \sum_{k=0}^{\lfloor \frac{n}{2} \rfloor} s_{2n-2k,2k}.
\]
Using this operator, this formula can be described recursively:
\[
h_2[h_n] = s_{22} \odot (h_2[h_{n-2}]) + s_{2n},
\]
with $h_2[h_0] = s_0 = 1$ and $h_2[h_1] = h_2$. 

We also need the original formula for $h_3[h_n]$, due to Thrall \cite{Thrall}:

\begin{thm}
For any $n \in \mathbb{N}$, we have
\[
h_3[h_n] = \displaystyle \sum_{\substack{\lambda \vdash 3n \\ \ell (\lambda) \leq 3}} f_{\lambda} s_{\lambda},
\]
where $f_{\lambda}$ can be described as follows: take $a_{\lambda} = \min\{1+\lambda_1-\lambda_2, 1+\lambda_2-\lambda_3\}$, and define $a_{\lambda}'=a_{\lambda}+i$, where $i$ is the only number in $\{-2,0,2\}$ such that $a_{\lambda}'$ is a multiple of $3$. If $a_{\lambda}'$ is even, then $f_{\lambda} = \frac{a_{\alpha}'}{6}$. If $a_{\alpha}'$ is odd and $\lambda_2$ is even, $f_{\lambda} = \frac{a_{\alpha}'+3}{6}$; if $\lambda_2$ is odd, $f_{\lambda}=\frac{a_{\alpha}'-3}{6}$. 
\end{thm}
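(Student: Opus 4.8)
The plan is to start from the power-sum expansion $h_3 = \tfrac16 p_1^3 + \tfrac12 p_2 p_1 + \tfrac13 p_3$ and use that plethysm by a fixed inner function is a ring homomorphism in the outer variable, together with $p_k[h_n] = h_n(x_1^k,x_2^k,\dots)$, to obtain
\[
h_3[h_n] \;=\; \tfrac16\, h_n^{\,3} \;+\; \tfrac12\, h_n \cdot p_2[h_n] \;+\; \tfrac13\, p_3[h_n].
\]
(Equivalently one can use Newton's identity $3h_3 = h_2 p_1 + p_1 p_2 + p_3$ to write $h_3[h_n] = \tfrac13\bigl(h_n\, h_2[h_n] + h_n\, p_2[h_n] + p_3[h_n]\bigr)$ and feed in the formula for $h_2[h_n]$ recalled above.) Each of the three summands is then expanded in the Schur basis and the coefficients are added.

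For the first summand, $h_n^{\,3} = \sum_{\lambda} K_{\lambda,(n,n,n)}\, s_\lambda$ with $K$ the Kostka number; since $K_{\lambda,(n,n,n)}\neq 0$ forces $\lambda \trianglerighteq (n,n,n)$, only $\ell(\lambda)\le 3$ occurs, and a direct count of semistandard tableaux of content $(n,n,n)$ on a three–row shape yields the clean identity $K_{\lambda,(n,n,n)} = 1+\min(\lambda_1-\lambda_2,\lambda_2-\lambda_3) = a_\lambda$. For the second summand I would use the Littlewood formula $p_2[h_n] = \sum_{k=0}^{n}(-1)^k s_{(2n-k,k)}$ together with the Pieri rule; the coefficient of a three–row $s_\lambda$ becomes an alternating sum $\sum_{k=k_-}^{k_+}(-1)^k$ over an explicit interval of $k$, collapsing to $P_\lambda\in\{-1,0,1\}$ determined only by the parity of $k_+-k_- = a_\lambda-1$ and of $k_-$. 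For the third summand I would use the classical $3$-quotient/$3$-core rule for $p_3[s_{(n)}]$: since $\langle s_{(n)}, s_\alpha s_\beta s_\gamma\rangle\neq 0$ only when $\alpha,\beta,\gamma$ are single rows, one gets $p_3[h_n] = \sum_{a+b+c=n}\varepsilon_{a,b,c}\, s_{\lambda(a,b,c)}$ where $\lambda(a,b,c)$ has empty $3$-core with $3$-quotient $((a),(b),(c))$ and $\varepsilon_{a,b,c}=\pm1$; on the abacus one checks that each such $\lambda$ again has at most three rows and that $(a,b,c)\mapsto\lambda(a,b,c)$ is a bijection onto the three–row partitions of $3n$ whose shifted parts $\lambda_1+2,\lambda_2+1,\lambda_3$ are pairwise distinct mod $3$. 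So here the contribution to the coefficient of $s_\lambda$ is an explicit $\varepsilon_\lambda\in\{-1,0,1\}$.

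Assembling, the coefficient of $s_\lambda$ in $h_3[h_n]$ equals $\tfrac16\bigl(a_\lambda + 3P_\lambda + 2\varepsilon_\lambda\bigr)$, and it remains to check this equals Thrall's $f_\lambda$ for every three–row $\lambda\vdash 3n$. This is the step I expect to be the real work: it amounts to a short list of congruence identities linking $a_\lambda \bmod 6$, the residues of $\lambda_1,\lambda_2,\lambda_3$ mod $3$ (which control whether $\varepsilon_\lambda=0$ and, if not, its sign — and are exactly what makes $a'_\lambda$ a multiple of $3$), and the parities of $\lambda_2$ and of $k_-$ (which control $P_\lambda$). Concretely one shows: if $a_\lambda$ is even then $P_\lambda=0$ and $2\varepsilon_\lambda = a'_\lambda - a_\lambda$; if $a_\lambda$ is odd then $\varepsilon_\lambda = (a'_\lambda - a_\lambda)/2$ while $P_\lambda = (-1)^{\lambda_2}$, the last being the delicate equality and the place where the ``$\lambda_2$ even/odd'' dichotomy of the statement enters. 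Substituting into $\tfrac16(a_\lambda + 3P_\lambda + 2\varepsilon_\lambda)$ then reproduces $\tfrac{a'_\lambda}{6}$, $\tfrac{a'_\lambda+3}{6}$ or $\tfrac{a'_\lambda-3}{6}$ exactly. The low-degree cases $|\lambda|=3$ and $|\lambda|=6$ (where $h_3[h_n]$ equals $s_{(3)}$ and $s_{(6)}+s_{(4,2)}+s_{(2,2,2)}$) are a useful sign check throughout; the main obstacle is organizing this mod-$3$/parity casework cleanly rather than any single hard computation.
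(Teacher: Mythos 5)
This is a statement the paper does not prove at all: it is quoted verbatim from Thrall's 1942 paper and used as the starting point for the recurrence in Section 2, so any proof you give is necessarily ``a different route.'' Your outline is the standard modern way to establish Thrall's formula, and its skeleton is sound. The decomposition $h_3[h_n]=\tfrac16 h_n^3+\tfrac12 h_n\,p_2[h_n]+\tfrac13 p_3[h_n]$ is correct, as are the three ingredient expansions: $K_{\lambda,(n,n,n)}=1+\min(\lambda_1-\lambda_2,\lambda_2-\lambda_3)=a_\lambda$; the Pieri computation for $h_n\cdot p_2[h_n]$, where the admissible $k$ do form the interval $[\max(2n-\lambda_1,\lambda_3),\,\min(2n-\lambda_2,\lambda_2)]$ --- though to get $k_+-k_-=a_\lambda-1$ you should record the small extra observation that, because $|\lambda|=3n$, the two ``cross terms'' $\lambda_1-n$ and $n-\lambda_3$ can never be the strict minimum, and that $k_+=2n-\lambda_2$ pairs with $k_-=2n-\lambda_1$ exactly when $\lambda_2\ge n$ (which is also when $\lambda_1-\lambda_2$ achieves the minimum), so that $(-1)^{k_-}=(-1)^{\lambda_2}$ in both regimes; and the SXP/$3$-quotient description of $p_3[h_n]$. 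I spot-checked your assembled formula $\tfrac16(a_\lambda+3P_\lambda+2\varepsilon_\lambda)$ against $h_3[h_2]=s_{(6)}+s_{(4,2)}+s_{(2,2,2)}$ and it matches.

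What you have, however, is a proof plan rather than a proof: the two identities you correctly single out as ``the real work'' --- that $\varepsilon_\lambda=(a'_\lambda-a_\lambda)/2$ and that $P_\lambda=(-1)^{\lambda_2}$ when $a_\lambda$ is odd --- are asserted with a plausible mechanism but not carried out, and the first one is where the sign convention in the SXP rule will bite you (the naive ``sign of the sorting permutation of the $\beta$-numbers'' reads off with the wrong sign in small examples, e.g.\ for $\lambda=(3,3)$, so you must fix the convention carefully and then run the mod-$3$ casework on $\lambda_1-\lambda_2$ and $\lambda_2-\lambda_3$). Since this remaining verification is a finite list of congruence cases entirely analogous to the casework the paper itself performs in proving its Theorem 2.1, I consider the approach viable and essentially complete in structure; it just needs that last layer written out. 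Compared with the paper, which treats Thrall's formula as a black box and manipulates the closed form $f_\lambda$ via the auxiliary function $g$, your route has the advantage of actually explaining where the three-part structure of $f_\lambda$ (the $a_\lambda/6$ main term, the parity-of-$\lambda_2$ correction, and the mod-$3$ adjustment $a_\lambda\mapsto a'_\lambda$) comes from: they are precisely the $\tfrac16 h_n^3$, $\tfrac12 h_n p_2[h_n]$ and $\tfrac13 p_3[h_n]$ contributions.
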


As F. Bergeron points out to the author, we can have a recursive description of $f_{\lambda}$. For $i \in \mathbb{N}$ and $\omega \in \{0,1\}$, define
\[
g(i,\omega) = \left \{ \begin{array}{c l} g(i - 6 ,k) + 1 & \text{if} \ i \geq 6 \\ 1 & \text{if} \ k=0 \ \text{and} \ i \neq 0,2\\1 & \text{if} \ k=1 \ \text{and} \ i=4\\0 & \text{else} \\ \end{array} \right.
\] 
Then, $f_{\lambda}=g \left(a_{\lambda},\lambda_2  \mod2\right)$, with $a_{\lambda}$ as in the theorem.

\section{The formula}

We can now state and prove the following recurrence formula:

\begin{thm}
The plethysm $h_3[h_n]$ can be described as:
\begin{align*}
h_3[h_0] &= 1; \qquad \qquad h_3[h_1] = s_3; \\
(h_3[h_n])\downarrow_2 &= s_{66} \odot (h_3[h_{n-4}]) \downarrow_2 + \displaystyle \sum_{k=2}^n s_{3n-k,k} + s_{3n}; \\
h_3[h_n] &= (h_3 \circ h_n) \downarrow_2 + s_{222} \odot (h_3[h_{n-2}]) + s_{441} \odot ((h_3[h_{n-3}]) \downarrow_2 ).
\end{align*}
\end{thm}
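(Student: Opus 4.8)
The plan is to verify all three lines by expanding both sides in the Schur basis and comparing the coefficient of each $s_\lambda$, taking the formula of Thrall recalled above --- or, more conveniently, its recursive reformulation through $g$ --- as the definition of the coefficients $f_\lambda$ in the Schur expansion of $h_3[h_n]$; recall that $f_\lambda$ depends on $\lambda$ alone. Three elementary observations carry most of the load. First, $a_\lambda$ depends only on the consecutive differences $\lambda_1 - \lambda_2$ and $\lambda_2 - \lambda_3$, hence is unchanged by a diagonal shift $\lambda \mapsto \lambda + (c,c,c)$; since $\lambda_2 \bmod 2$ is likewise unchanged by a shift through an even constant, $f_\lambda$ is invariant under $\lambda \mapsto \lambda + (2,2,2)$. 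Second, for a two-row $\lambda$ with $|\lambda| = 3n$ a direct computation gives $a_\lambda = 1 + \lambda_2$ when $\lambda_2 \le n$ and $a_\lambda = 1 + 3n - 2\lambda_2$ when $\lambda_2 \ge n$, while for a three-row $\lambda$ with $\lambda_3 = 1$ one has $a_\lambda = \min\{1 + \lambda_1 - \lambda_2,\ \lambda_2\}$. Third, besides the defining recursion $g(i,\omega) = g(i-6,\omega) + 1$ for $i \ge 6$, the function $g$ satisfies the ``period-three'' identity $g(i,\, i \bmod 2) = g(i-3,\, i \bmod 2)$ for every $i \ge 0$ (with $g$ of a negative argument read as $0$), which is immediate from $g(6m+r,\omega) = m + g(r,\omega)$ by inspecting $r \in \{0,\dots,5\}$. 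I will also use that $s_{66}\odot$, $s_{222}\odot$ and $s_{441}\odot$ act on Schur coefficients as the index shifts $\lambda \mapsto \lambda - (6,6)$, $\lambda \mapsto \lambda - (2,2,2)$ and $\lambda \mapsto \lambda - (4,4,1)$, a term being dropped whenever the shifted index is not a partition or the plethysm index turns negative.

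The two values $h_3[h_0] = 1$ and $h_3[h_1] = s_3$ are the cases $n \in \{0,1\}$ of Thrall's formula (and also follow from $h_n[1] = 1$ and $f[p_1] = f$); moreover the last two lines of the statement hold for $n \le 1$ as well under the dropped-term convention, so it suffices to treat $n \ge 2$. For the second line, fix a two-row partition $\lambda$ with $|\lambda| = 3n$ and set $\mu = \lambda - (6,6)$; I must show $f_\lambda = f_\mu + [\,\lambda_2 \in \{0\} \cup \{2,\dots,n\}\,]$, reading $f_\mu = 0$ when $\mu$ is not a partition. Using the two-row evaluation of $a$ I split on $\mu_2$: for $\mu_2 \le n-6$ one has $a_\lambda = 1 + \lambda_2 = a_\mu + 6$, so the recursion for $g$ produces the $+1$, in agreement with the indicator since then $6 \le \lambda_2 \le n$; for $\mu_2 = n-4$ one finds $a_\lambda = a_\mu = n-3$, matching the indicator value $0$; for $\mu_2 = n-5$ one gets $a_\lambda = a_\mu + 3$ with $\lambda_2 \equiv a_\lambda \pmod 2$, so the period-three identity gives equality, again matching $0$; and the leftover ranges $\lambda_2 < 6$ (where $\mu$ fails to be a partition, so one needs only that $f_\lambda$ itself equal the indicator, which a direct check against the explicit $g$ confirms) and $\lambda_2 > n+2$ (where $a_\lambda = a_\mu$ by the two-row formula and the indicator vanishes) are immediate. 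Summing over $\lambda$ gives the second line.

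For the third line, the key point is that $(h_3 \circ h_n)\downarrow_2 = (h_3[h_n])\downarrow_2$ already supplies every Schur term indexed by a partition of length at most $2$, so only the coefficients with $\ell(\lambda) = 3$ remain, and there the summands $s_{222}\odot(h_3[h_{n-2}])$ and $s_{441}\odot((h_3[h_{n-3}])\downarrow_2)$ have disjoint supports $\lambda_3 \ge 2$ and $\lambda_3 = 1$. When $\lambda_3 \ge 2$ (which forces $n \ge 2$), the required identity $f_\lambda = f_{\lambda - (2,2,2)}$ is exactly the shift-invariance noted above. When $\lambda_3 = 1$, writing $\mu = (\lambda_1 - 4, \lambda_2 - 4)$ one compares $a_\lambda = \min\{1 + \lambda_1 - \lambda_2,\ \lambda_2\}$ with $a_\mu = \min\{1 + \lambda_1 - \lambda_2,\ \lambda_2 - 3\}$: if both minima are realized at the common first entry then $a_\lambda = a_\mu$; if both are realized at the second entry --- which, since $\lambda_1 + \lambda_2 = 3n - 1$, occurs precisely for $\lambda_2 \le n$ --- then $a_\lambda = \lambda_2$, $a_\mu = \lambda_2 - 3$ and $\mu_2 = \lambda_2 - 4 \equiv \lambda_2$, so the period-three identity closes it; and the only mixed configuration allowed by $|\lambda| = 3n$ is $\lambda_2 = n$, where $a_\lambda = \lambda_2 = n$, $a_\mu = n-3$ and the same identity applies. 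Finally, for $\lambda_2 \le 3$ the shifted index $\mu$ is not a partition and the explicit $g$ gives $f_\lambda = 0$ for $\lambda = (\lambda_1,\lambda_2,1)$, as needed. Assembling the cases proves the third line.

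The crux, and the only place where genuine care is required, is this family of ``branch-switching'' boundary cases --- where the minimum defining $a_\lambda$ is attained at a different entry for $\lambda$ than for the shifted partition --- together with the small-$\lambda_2$ and small-$n$ degeneracies: it is exactly there that the correction terms $\sum_{k=2}^{n} s_{3n-k,k} + s_{3n}$ and $s_{441}\odot((h_3[h_{n-3}])\downarrow_2)$ do their work, and one must confirm that their supports are precisely right --- in particular that $k=1$ is absent from the sum, and that the three pieces of the third line genuinely partition the partitions of length at most $3$ according to whether $\lambda_3$ equals $0$, equals $1$, or is at least $2$. Everything else is bookkeeping with the explicit description of $g$.
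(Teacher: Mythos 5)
Your proposal is correct, and its skeleton is the same as the paper's: take Thrall's formula, in its recursive form through $g$, as the definition of the coefficients $f_\lambda$; note that $s_{66}\odot$, $s_{222}\odot$ and $s_{441}\odot$ act as the index shifts by $(6,6)$, $(2,2,2)$ and $(4,4,1)$, whose images account exactly for the two-row partitions with $\lambda_2\geq 6$, the three-row partitions with $\lambda_3\geq 2$, and those with $\lambda_3=1$, $\lambda_2\geq 4$ respectively; and then verify the resulting coefficient identities case by case (your cases match the paper's equations $(1)$--$(8)$ one for one, including the invariance of $f_\lambda$ under $\lambda\mapsto\lambda+(2,2,2)$ and the explicit vanishing for $\lambda=(\lambda_1,\lambda_2,1)$ with $\lambda_2\leq 3$). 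The one genuine difference is your ``period-three'' identity $g(i,\,i\bmod 2)=g(i-3,\,i\bmod 2)$, which is easily checked from $g(6m+r,\omega)=m+g(r,\omega)$ by running over $r\in\{0,\dots,5\}$. The paper handles the branch-switching boundary cases --- $k=n+1$ in its equation $(5)$ and $4\leq k\leq n$ in its equation $(7)$, where $a_\lambda=a_\mu+3$ --- by enumerating thirteen sub-cases indexed by the residue of $n$ or $k$ modulo $6$ and a parity; your identity, combined with the observation that in each such case $\lambda_2\equiv a_\lambda\pmod 2$, disposes of all of them at once. That is a real compression of the bulkiest part of the argument and would be worth stating as a lemma. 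One small quibble: in your $\lambda_3=1$ analysis the genuinely ``mixed'' configuration is $\lambda_2=n+1$ (where in fact $a_\lambda=a_\mu$ because the two entries defining $a_\mu$ coincide), while your $\lambda_2=n$ case is already subsumed by the ``both minima at the second entry'' branch; this mislabeling is harmless, since every value of $\lambda_2$ is covered by one of your verifications and each verification is correct.
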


\begin{proof}
Using Thrall formula and the bilinearity of $\odot$, we have to show that
\begin{align*}
\displaystyle \sum_{\substack{\lambda \vdash 3n \\ \ell (\lambda) \leq 2}} f_{\lambda} s_{\lambda} &= \displaystyle \sum_{\substack{\mu \vdash (3n-12) \\ \ell (\mu) \leq 2}} f_{\mu} (s_{66} \odot s_{\mu}) + \displaystyle \sum_{k=2}^n s_{3n-k,k} + s_{3n}; \\
\displaystyle \sum_{\substack{\lambda \vdash 3n \\ \ell (\lambda) = 3}} f_{\lambda} s_{\lambda}  &=  \displaystyle \sum_{\substack{\mu \vdash (3n-6) \\ \ell (\mu) \leq 3}} f_{\mu} (s_{222} \odot s_{\mu}) + \displaystyle \sum_{\substack{\mu \vdash (3n-9) \\ \ell (\mu) \leq 2}} f_{\mu} (s_{441} \odot s_{\mu}).
\end{align*}

To prove the first equation, we have to show that:
\begin{align}
f_{(3n)} &= 1; \\
 f_{(3n-1,1)} &= 0; \\
f_{(3n-k,k)} &= 1 && \text{if} \ 2 \leq k \leq 5; \\
f_{(3n-k,k)} &= f_{(3n-k-6,k-6)} + 1  && \text{if} \ 6 \leq k \leq n; \\
f_{(3n-k,k)} &= f_{(3n-k-6,k-6)} && \text{if} \ n+1 \leq k \leq \lfloor \frac{3n}{2} \rfloor.
\end{align}

$(1)$: We have $a_{(3n)} = \min\{3n+1,1\} = 1$, so $f_{(3n)} = g(1,0) = 1$. 

$(2)$: We have $a_{(3n-1,1)} =\min\{3n,2\}=2$, so $f_{(3n-1,1)} = g(2,1) = 0$. 

$(3)$: We have to consider all the cases one by one. 
\begin{enumerate}[label=\roman*)]
\item We have $a_{(3n-2,2)} =\min\{3n-1,3\}=3$, and $f_{(3n-2,2)}=g(3,0)=1$. 
\item We have $a_{(3n-3,3)} =\min\{3n-2,4\}=4$, and $f_{(3n-3,3)}=g(4,1)=1$. 
\item We have $a_{(3n-4,4)} =\min\{3n-3,5\}=5$, and $f_{(3n-4,4)}=g(5,0)=1$.
\item We have $a_{(3n-5,5)} =\min\{3n-4,6\}=6$, and $f_{(3n-5,5)}=g(6,1)=g(0,1)+1=1$.  
\end{enumerate}

$(4)$: For $6 \leq k \leq n$, we have $a_{(3n-k,k)} =\min\{1+3n-2k,1+k\}=1+k$, and $f_{(3n-k,k)} = g(1+k, k \mod 2)$. Define $\mu = (3n-k-6,k-6)$, so that $(3n-k,k) = (6,6)+\mu$. Then, we have $a_{\mu} = \min\{1+3n-2k,k-5\} = k-5$, so $f_{\mu} = g(k-5,k \mod 2)$. Applying the recursiveness of the function $g$, we have $f_{(3n-k,k)} = g(k+1,k\mod2) = g(k-5,k\mod2) + 1 = f_{\mu}+1$. 

$(5)$: For $n+1 \leq k \leq \lfloor \frac{3n}{2} \rfloor$, there are two cases. When $n+1 \leq k \leq n+2$, we have that $a_{(3n-k,k)} = \min\{1+3n-2k,1+k\}= 1+3n-2k$ and $a_{\mu} = \min\{1+3n-2k,k-5\}=k-5$. If $n+3 \leq k \leq \lfloor \frac{3n}{2} \rfloor$, $a_{(3n-k,k)} = a_{\mu} = 1+3n-2k$. So, again, we have to consider all the possible cases:
\begin{enumerate}[label=\roman*)]
\item For $k=n+1$, so $\lambda=(2n-1,n+1)$, we have $f_{\mu}=g(n-4, n-5 \mod2)$, and $f_{\lambda}=g(n-1,n+1 \mod2)$. If $m = \lfloor \frac{n-6}{6} \rfloor$ and $\ell = n \mod6$, then $n=6(m+1)+\ell$. So $f_{\mu} = g(\ell+2,n-5 \mod 2) + m$, and $f_{\lambda} = g(\ell+5,n+1\mod2)+m$. If $\omega = n+1 \mod2$, then we have the following cases:
\begin{enumerate}[label=\alph*)]
\item $\omega=0$,$\ell=1$: $f_{\mu}=g(3,0)+m=m+1$ and $f_{\lambda}=g(6,0)+m=g(0,0)+m+1=m+1$, so they are equal. 
\item $\omega=0$,$\ell=3$: $f_{\mu}=g(5,0)+m=m+1$ and $f_{\lambda}=g(8,0)+m=g(2,0)+m+1=m+1$, so they are equal. 
\item $\omega=0$,$\ell=5$: $f_{\mu}=g(7,0)+m=g(1,0)+m+1=m+2$ and $f_{\lambda}=g(10,0)+m=g(4,0)+m+1=m+2$, so they are equal. 
\item $\omega=1$,$\ell=0$: $f_{\mu}=g(2,1)+m=m$ and $f_{\lambda}=g(5,1)+m=m$, so they are equal. 
\item $\omega=1$,$\ell=2$: $f_{\mu}=g(4,1)+m=m+1$ and $f_{\lambda}=g(7,1)+m=g(1,1)+m+1=m+1$, so they are equal. 
\item $\omega=1$,$\ell=4$: $f_{\mu}=g(6,1)+m=g(0,1)+m+1=m+1$ and $f_{\lambda}=g(9,1)+m=g(3,1)+m+1=m+1$, so they are equal. 
\end{enumerate}
\item For $k=n+2$, so $\lambda=(2n-2,n+2)$, we have $f_{\mu}=g(n-3, n-4 \mod2)$, and $f_{\lambda}=g(n-3, n+2 \mod2)$, so they are equal.
\item For $n+3 \leq k \leq \lfloor \frac{3n}{2} \rfloor$, we have $f_{\mu}=g(1+3n-2k,k-5 \mod 2)$ and $f_{\lambda}=g(1+3n-2k,k+1 \mod2)$, so they are equal.
\end{enumerate}

Now, consider the second equation. For partitions $\lambda \vdash 3n$ with $3$ parts, we have to show that:
\begin{align}
f_{(\lambda_1,\lambda_2,\lambda_3)} &= f_{(\lambda_1-2,\lambda_2-2,\lambda_3-2)} && \text{if} \ \lambda_3 \geq 2; \\
f_{(\lambda_1,\lambda_2,\lambda_3)} &= f_{(\lambda_1-4,\lambda_2-4)} && \text{if} \ \lambda_3 =1 \ \text{and} \ \lambda_2 \geq 4; \\
f_{(\lambda_1,\lambda_2,\lambda_3)} &= 0 && \text{if} \ \lambda_3 =1 \ \text{and} \ \lambda_2 \leq 3.
\end{align} 

$(6)$: Define $\mu=(\lambda_1-2,\lambda_2-2,\lambda_3-2)$, so that $\lambda=(2,2,2)+\mu$. We have that $a_{\lambda}$ is either $1+\lambda_1-\lambda_2=1+(\mu_1-2)-(\mu_2+2)=1+\mu_1-\mu_2$ or $1+\lambda_2-\lambda_3=1+\mu_2-\mu_3$, so $a_{\lambda}=a_{\mu}$. We also have that $f_{\lambda}=g(a_{\lambda},\lambda_2 \mod2)$ and $f_{\mu}=g(a_{\lambda},\lambda_2-2 \mod2)$, so they are equal. 

$(7)$: If $\lambda_3=1$ and $\lambda_2 \geq 4$, then $\lambda=(3n-k-1,k,1)$ for $4 \leq k \leq \lfloor \frac{3n-1}{2}\rfloor$. Define $\mu=(3n-k-5,k-4)$, so that $\lambda = (4,4,1)+\mu$. We have to examine a few cases:
\begin{enumerate}[label=\roman*)]
\item If $4 \leq k \leq n$, then $a_{(3n-k-1,k,1)}=\min\{3n-2k,k\}=k$ and $a_{\mu}=\min\{3n-2k,k-3\}=k-3$. If $m = \lfloor \frac{k-6}{6} \rfloor$, $\ell = k \mod6$ and $\omega=k \mod2$, we have the following cases:
\begin{enumerate}[label=\alph*)]
\item $k=5$: $f_{\mu}=g(2,1)=0$ and $f_{\lambda}=g(5,1)=0$, so they are equal.
\item $\omega=0$,$\ell=0$: $f_{\mu}=g(3,0)+m=m+1$ and $f_{\lambda}=g(6,0)+m=g(0,0)+m+1=m+1$, so they are equal.
\item $\omega=0$,$\ell=2$: $f_{\mu}=g(5,0)+m=m+1$ and $f_{\lambda}=g(8,0)+m=g(2,0)+m+1=m+1$, so they are equal.
\item $\omega=0$,$\ell=4$: $f_{\mu}=g(7,0)+m=g(1,0)+m+1=m+2$ and $f_{\lambda}=g(10,0)+m=g(4,0)+m+1=m+2$, so they are equal.
\item $\omega=1$,$\ell=1$: $f_{\mu}=g(4,1)+m=m+1$ and $f_{\lambda}=g(7,1)+m=g(1,1)+m+1=m+1$, so they are equal.
\item $\omega=1$,$\ell=3$: $f_{\mu}=g(6,1)+m=g(0,1)+m+1$ and $f_{\lambda}=g(9,1)+m=g(3,1)+m+1=m+1$, so they are equal.
\item $\omega=1$,$\ell=5$: $f_{\mu}=g(8,1)+m=g(2,1)+m+1$ and $f_{\lambda}=g(11,1)+m=g(5,1)+m+1=m+1$, so they are equal.
\end{enumerate}
\item If $k=n+1$, then $\lambda=(2n-2,n+1,1)$. We have $a_{(2n-2,n+1,1)}=\min\{n-2,n+1\}=n-2$ and $a_{\mu}=\min\{n-1,n-2\}=n-2$, so they are equal. Then, $f_{\lambda} = g(a_{\lambda},k \mod2) = g(a_{\mu}, k-4 \mod2) = f_{\mu}$.
\item If $n+2 \leq k \leq \lfloor \frac{3n}{2}\rfloor$, then $a_{(3n-k-1,k,1)}=\min\{3n-2k,k\}=3n-2k$ and $a_{\mu}=\min\{3n-2k,k-3\}=3n-2k$, so they are equal. Then, $f_{(3n-k-1,k,1)}=g(a_{\lambda},k \mod2) = g(a_{\mu}, k-4 \mod2) = f_{\mu}$.
\end{enumerate}

$(8)$: If $\lambda_3=1$ and $\lambda_2 \leq 3$, then we have to test the three possibilities and see that the coefficient is zero:
\begin{enumerate}[label=\roman*)]
\item If $\lambda_2=1$, then $a_{(3n-2,1,1)}=\min\{3n-2,1\}=1$, and $f_{(3n-2,1,1)}=g(1,1)=0$.
\item If $\lambda_2=2$, then $a_{(3n-3,2,1)}=\min\{3n-4,2\}=2$, and $f_{(3n-3,2,1)}=g(2,0)=0$.
\item If $\lambda_2=3$, then $a_{(3n-4,3,1)}=\min\{3n-6,3\}=3$, and $f_{(3n-4,3,1)}=g(3,1)=0$.
\end{enumerate}

So, the formula is true.
\end{proof}

\section{Conclusion}

This recurrence gives a faster way to compute $h_3[h_n]$. The author firmly believes that such recurrence formulas can be found to compute $h_m[h_n]$ recursively for any $m$. In effect, Dent's two column result [ ? ] is equivalent to the fact that $h_m[h_n] - s_{\underbrace{22...2}_{m \ \text{times}}} \odot h_m[h_{n-2}]$ is Schur-positive. If we find other results that have a nice recursive definition, this would hint to a proof of the Foulkes' conjecture. But even in the case $h_4[h_n]$, such a recurrence is hard to find. 

\bibliographystyle{abbrv}
\nocite{*}
\bibliography{bibplethysm}

\end{document}